\newcommand{\subscript}[2]{$#1 _ #2$}
\newtheorem{theorem}{Theorem}[section]
\newtheorem{lemma}[theorem]{Lemma}
\newtheorem{remark}[theorem]{Remark}
\theoremstyle{remark}
\newcommand{\eps}{\varepsilon}
\newcommand{\pa}{\partial}
\renewcommand{\epsilon}{\varepsilon}
\renewcommand{\O}{\mathcal{O}}
\providecommand{\keywords}[1]
{
  \small	
  \textbf{\textit{Keywords---}} #1
}
\title{Stabilization by Multiplicative It\^o Noise for Chafee-Infante Equation in Perforated Domains}
\author[1]{Hong Hai Ly}
\author[2]{Bao Quoc Tang}
\affil[1]{\small 
Department of Mathematics\\ 
University of Ostrava, Ostrava, Czech Republic\break
\href{mailto:hai.ly.s01@osu.cz}{hai.ly.s01@osu.cz}}
\affil[2]{\small Department of Mathematics and Scientific Computing, University of Graz, Graz, Austria\break  \href{mailto:quoc.tang@uni-graz.at}{quoc.tang@uni-graz.at}}
\date{}
\begin{document}

\maketitle
% Here is the abstract.
\begin{abstract}
	The stabilization by noise for parabolic equations in perforated domains, i.e. domains with small holes, is investigated. We show that when the holes are small enough, one can stabilize the unstable equations using suitable multiplicative It\^o noise. The results are quantitative, in the sense that we can explicitly estimate the size of the holes and diffusion coefficients for which stabilization by noise takes place. This is done by using the asymptotic behaviour of the first eigenvalue of the Laplacian as the hole shrinks to a point.
\end{abstract}
\keywords{Stabilization by noise, Perforated domains, Asymptotic behaviour of eigenvalues}
\section{Introduction and main result}
%\section{Problem formulation}
Consider a bounded domain $\O$ in $\mathbb{R}^n$, $n\in \{2,3\}$, with Lipschitz boundary $\partial\O$. Within this domain, there is a family of small pairwise disjoint open holes $\{E^i_\eps\}_{i=1}^{\ell}$, $\ell \ge 1$, which have size of order $\eps>0$, i.e. for each $E^i_\eps$, there is a ball $\mathbb{B}^i_\eps$ with radius $\eps$ such that $E^i_\eps \subset \mathbb{B}^i_\eps$. We assume that the holes are properly inside the domain, i.e. $\overline{E_\eps^i} \subset \O$, and the boundary $\partial E^i_\eps$ is also Lipschitz for each $i=1,\ldots, \ell$. The open perforated domain $\O_\eps= \O \backslash \bigcup\limits_{i=1}^{\ell}\overline{E^i_{\eps}}$ is obtained by removing these holes from $\O$. We study in this work a stochastic Chafee-Infante equation defined on this domain, described by the following
\begin{equation}\label{main-prob}
\begin{cases}
    du + (- \Delta u + u^3 - \beta u)dt = h(t,u)dW_t, &x\in\O_\varepsilon,\; t>0,\\
    u(x,t) = 0, &x\in\partial\O_\eps, \; t>0,\\
    u(x,0) = u_0(x), &x\in\O_\eps,
    \end{cases}
\end{equation}
where $\beta >0$ and $W_t$ is a standard real-valued scalar Wiener process defined on the probability space $(\Omega,\mathcal{F}, P)$ with natural filtration $(\mathcal{F}_t)_{t\ge 0}$, $dW_t$ denotes the It\^o differential and $u_0 \in L^2(\O_\eps)$ is given initial data. It holds obviously $\partial\O_\eps = \cup_{j=1}^\ell\partial E_\eps^i\cup \partial\O$.
     \medskip
    The diffusion coefficient $h: [0,\infty)\times \mathbb R \to \mathbb R$ is assumed to satisfy following conditions:
    \begin{enumerate} [label=(\subscript{H}{{\arabic*}})]
        \item\label{H_0} for all $t\ge 0$, $u\in \mathbb R$, $h(t,0) = 0$,
        \item \label{H_1} for all $t\ge 0$, $u\in \mathbb R$, $| h(t,u)|^2 \le \gamma(t) |u|^2$,  where $\gamma: [0,\infty) \to [0,\infty)$ is a continuous function such that
        \[ \limsup_{t \to \infty} \frac{1}{t} \int_0^t \gamma(s)ds \le \gamma_0  \quad \text{ with } \gamma_0 \text{ is a non-negative constant.}\] 
        \item \label{H_2} for $t\ge 0$, $u\in \mathbb R$, $|h(t,u)u|^2 \ge \rho(t) |u|^4$, where $\rho:[0,\infty) \to [0,\infty)$ is a  continuous function such that
         \[ \liminf_{t \to \infty} \frac{1}{t} \int_0^t \rho(s)ds \ge \rho_0 \quad \text{ with } \rho_0 \text{ is a non-negative constant.}\] 
    \end{enumerate}
    The assumption \ref{H_0} is to ensure that $u \equiv 0$ is a trivial steady state solution to \eqref{main-prob}. Conditions \ref{H_1} and \ref{H_2} are Lipschitz continuity at zero and dissipative properties.
Typical examples of the diffusion coefficient $h(t,u)$ satisfying conditions \ref{H_0}--\ref{H_2} include
\begin{itemize}
    \item the linear function $h(t,u)=\alpha u$, $\alpha \in \mathbb R$, for which $\gamma(t) = \rho(t) = \gamma_0 = \rho_0 = \alpha^2$ for all $t\in \mathbb R$,
    \item or the nonlinear function $h(t,u) = \dfrac{(2+t^2)(1+u^2)}{(1+t^2)(2+u^2)}u$ for which $\gamma(t)$ and $\rho(t)$ can be chosen as
    \begin{equation*}
        \gamma(t) = 16\left(\frac{2+t^2}{1+t^2}\right)^2, \quad \rho(t) = \left(\frac{2+t^2}{2(1+t^2)}\right)^2,
    \end{equation*}
    and consequently one can choose $\gamma_0 = 64$ and $\rho_0=1$.
\end{itemize}
Note that when $h(t,u)\equiv 0$, \eqref{main-prob} becomes the classical deterministic Chafee-Infante equation, for which the stability of the zero steady state is determined by the value of $\beta>0$ and the first eigenvalue of $-\Delta$ with homogeneous Dirichlet boundary condition.

\medskip
%The main aim of this paper is to study the stabilization for deterministic Chafee-Infantee equation by multiplicative noise of the form $h(t,u)dW_t$ in perforated domains $\O_\eps$. 
To put our work into context, let us briefly review the literature on stabilization by noise. The stabilization using multiplicative noise has been investigated since the eighties firstly for finite dimensional systems, see e.g. \cite{graham1982stabilization,arnold1983stabilization,arnold1990stabilization}. For the linear finite dimensional system $\dot{x} = Ax$, this phenomenon is completely characterized in \cite{arnold1983stabilization} where it was proved to be stabilizable by linear multiplicative noise if and only $\text{Trace}(A) <0$. The case of nonlinear systems and nonlinear noise, still in finite dimensional setting, is more complicated as it was shown in \cite{appleby2008stabilization} that random noise can also destabilize stable systems. The first work dealing with infinite dimensional system is \cite{kwiecinska1999stabilization}, therein the author showed that the multiplicative noise shifts the spectrum of Laplacian to the right, thus stabilizing the system. The stabilization by noise for partial differential equations has been then studied extensively, see e.g. \cite{caraballo2001stabilization,kwiecinska2002stabilization,caraballo2003stochastic,caraballo2004stabilisation,caraballo2007effect}. All these works showed that the is an infinite range of noise intensity that stabilizes the deterministic system. Notably, a recent work \cite{fellner2019stabilisation} showed that when the noise is acting solely on the boundary, one might stabilize the deterministic system only with a finite range of noise intensities. It is worthwhile to emphasize that, up to our knowledge, all the existing works considered solid domains, i.e. without holes.
On the other hand, the stabilization by \textit{deterministic} control in perforated domains has been also investigated by many authors since it appears frequently in applications, see e.g. \cite{zhikov1992asymptotic,cioranescu1991strong,cioranescu1994approximate,banks2011parameter,brown2022heat}. Motivated by these developments, we investigate in this paper, for the first time, the stabilizing effect of multiplicative noise in perforated domains for the Chafee-Infante equation \eqref{main-prob}. 

\medskip
The main result of this paper is the following theorem.
\begin{theorem}\label{theo:3.1}
    Under {\normalfont \ref{H_1}--\ref{H_2}}, there exists a unique global strong solution to the stochastic equation \eqref{main-prob}. 
    
    \medskip
    If {\normalfont \ref{H_0}} is fulfilled and
    \begin{equation*}
         \rho_0 - \frac{\gamma_0}{2} > - \lambda_1^D(\O) + \beta,
    \end{equation*}
    where $\lambda_1^D(\O)$ is the first eigenvalue of the Dirichlet-Laplacian on $\O$, 
    then there exists $\eps_0$ such that
    \begin{equation}\label{epsilon0}
        \eps_0 \sim 
        \begin{cases}
            e^{-\left(\lambda^D_1(\O)+\rho_0 -(\beta+{\gamma_0}/{2})\right)^{-1}} &\text{ for } n = 2,\\
            \lambda^D_1(\O)+\rho_0-(\beta+{\gamma_0}/{2}) &\text{ for } n = 3,
        \end{cases}
    \end{equation}
    and for all $0<\eps \le \eps_0$, the zero steady state solution to \eqref{main-prob} is globally exponentially stable almost surely, i.e.
    \begin{equation*}
        \|u(t)\|_{L^2(\O_\eps)} \le Ce^{-\delta t} \quad \mathbb{P}-\text{a.s.}
    \end{equation*}
    where the positive constants $C, \delta$ {\normalfont are independent of $\eps$}. Here $A\sim B$ means that $C_0A \le B \le C_1A$ for positive constants $C_0, C_1$ independent of $A$ and $B$.
\end{theorem}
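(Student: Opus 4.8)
The plan is to treat the two assertions separately: global well-posedness, which is insensitive to the size of the holes, and the almost-sure exponential decay, where the interplay between the noise, the damping $\beta$ and the spectral gap of $\O_\eps$ enters.

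For existence and uniqueness I would cast \eqref{main-prob} in the variational (monotone) framework over the Gelfand triple $V := H_0^1(\O_\eps)\cap L^4(\O_\eps) \hookrightarrow L^2(\O_\eps) \hookrightarrow V^*$. Writing the drift as $A(u) = \Delta u - u^3 + \beta u$, the only non-monotone contribution is the linear term $\beta u$: since $-\Delta$ is monotone and $u\mapsto u^3$ is monotone, one has $\langle A(u)-A(v),u-v\rangle \le \beta\|u-v\|_{L^2}^2$, while \ref{H_1} supplies the linear growth of the diffusion and, together with the continuity of $h$, the Lipschitz-type estimate needed for uniqueness. Coercivity is clear from $\langle A(u),u\rangle \le -\|\nabla u\|_{L^2}^2 + \beta\|u\|_{L^2}^2$, with the cubic term giving extra $L^4$-coercivity, so a unique global variational solution follows from the standard theory for locally monotone SPDEs. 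One point I would record here, needed later to take logarithms, is that by \ref{H_0} the state $0$ is absorbing, so pathwise uniqueness forces $\|u(t)\|_{L^2(\O_\eps)}>0$ for all $t$ almost surely whenever $u_0\ne 0$.

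For the decay, the quantity that governs everything is the perforated spectral gap
\[
\delta_\eps := \lambda_1^D(\O_\eps) + \rho_0 - \bigl(\beta + \tfrac{\gamma_0}{2}\bigr).
\]
By domain monotonicity of Dirichlet eigenvalues, $\O_\eps\subset\O$ gives $\lambda_1^D(\O_\eps)\ge \lambda_1^D(\O)$, so the hypothesis $\rho_0-\gamma_0/2 > -\lambda_1^D(\O)+\beta$ already yields $\delta_\eps \ge \delta_0 := \lambda_1^D(\O)+\rho_0-(\beta+\gamma_0/2) > 0$ for every $\eps$. To pin down a rate $\delta$ and a constant $C$ \emph{independent of} $\eps$, I would invoke the asymptotics of $\lambda_1^D(\O_\eps)$ as the holes shrink --- logarithmic in the capacity, $\lambda_1^D(\O_\eps)-\lambda_1^D(\O)\sim 1/|\log\eps|$ for $n=2$, and linear, $\sim\eps$ for $n=3$ --- and define $\eps_0$ as the threshold at which this spectral shift becomes comparable to $\delta_0$. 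Inverting these relations produces exactly the scaling \eqref{epsilon0}, and for $0<\eps\le\eps_0$ one gets $\delta_0 \le \delta_\eps \lesssim \delta_0$, a gap comparable to the $\eps$-free quantity $\delta_0$; uniformity of $C$ then follows because all Poincaré and embedding constants on $\O_\eps$ are controlled by those on $\O$ through the zero-extension $H_0^1(\O_\eps)\hookrightarrow H_0^1(\O)$.

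The heart of the argument is an It\^o computation for $\log\|u(t)\|_{L^2(\O_\eps)}^2$. Applying It\^o's formula to $\|u\|_{L^2}^2$ gives
\[
d\|u\|_{L^2}^2 = \Bigl(-2\|\nabla u\|_{L^2}^2 - 2\|u\|_{L^4}^4 + 2\beta\|u\|_{L^2}^2 + \textstyle\int_{\O_\eps} h(t,u)^2\,dx\Bigr)dt + 2\Bigl(\textstyle\int_{\O_\eps} u\,h(t,u)\,dx\Bigr)dW_t,
\]
and a second application to $\log(\cdot)$ subtracts the It\^o correction $2\bigl(\int u h\,dx\bigr)^2/\|u\|_{L^2}^4\,dt$. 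Discarding the favourable cubic term, using Poincar\'e $\|\nabla u\|_{L^2}^2\ge \lambda_1^D(\O_\eps)\|u\|_{L^2}^2$, the upper bound \ref{H_1} on $\int h^2$ and the lower bound \ref{H_2} on $\bigl(\int u h\,dx\bigr)^2$, the drift of $\log\|u\|_{L^2}^2$ is bounded above by $-2\lambda_1^D(\O_\eps)+2\beta+\gamma(t)-2\rho(t)$. Integrating, dividing by $t$, and combining the Ces\`aro assumptions in \ref{H_1}--\ref{H_2} with the strong law of large numbers for the martingale part (whose quadratic variation is $\le 4\int_0^t\gamma\,ds\lesssim \gamma_0 t$, hence $o(t)$ almost surely) yields
\[
\limsup_{t\to\infty}\frac{1}{t}\log\|u(t)\|_{L^2(\O_\eps)}^2 \;\le\; -2\delta_\eps \quad \mathbb{P}\text{-a.s.},
\]
which is the claimed exponential stability with rate $\delta=\delta_\eps\ge\delta_0$.

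The step I expect to be the main obstacle is the lower bound coming from \ref{H_2}. The It\^o correction involves $\bigl(\int_{\O_\eps} u\,h(t,u)\,dx\bigr)^2$, whereas \ref{H_2} controls the pointwise quantity $|h(t,u)u|^2\ge \rho(t)|u|^4$; passing from the pointwise bound to a bound on the square of the integral requires that $u\,h(t,u)$ not change sign (true for the examples, where $h(t,u)=c(t,u)u$ with $c\ge 0$ and $c^2\ge\rho$, giving $\int u h\,dx \ge \sqrt{\rho}\,\|u\|_{L^2}^2$ and hence the needed $\ge 2\rho(t)$). Making this structural fact explicit --- or recasting \ref{H_2} directly in terms of $\int u\,h\,dx$ --- is precisely what makes the noise genuinely stabilizing rather than merely bounded. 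The remaining care is to justify working with $\log\|u\|_{L^2}^2$ (non-degeneracy, handled above) and to keep every constant traceable back to $\O$ rather than $\O_\eps$, so that the final $C,\delta$ are $\eps$-independent.
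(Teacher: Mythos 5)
Your proposal is correct and follows essentially the same skeleton as the paper's proof: well-posedness via the variational/monotone framework (the paper simply cites Pardoux's theorem, which is exactly the framework you reconstruct over $H_0^1(\O_\eps)\cap L^4(\O_\eps)$), It\^o's formula for $\log\|u(t)\|_{L^2(\O_\eps)}^2$, the Poincar\'e inequality \eqref{Poincare}, hypotheses \ref{H_1}--\ref{H_2} to control the drift, and the asymptotics of Theorem \ref{theo:2.1} to pass from $\lambda_1^D(\O_\eps)$ to $\lambda_1^D(\O)$. The one step where you genuinely diverge is the stochastic integral: the paper invokes the exponential martingale inequality of \cite{liu1997stability} together with Borel--Cantelli, which introduces a parameter $\alpha\in(0,1)$ (hence the factor $2-\alpha$ in front of $\rho$) that is removed only at the end by letting $\alpha\to 0$, whereas you use the strong law of large numbers for continuous local martingales, noting that the quadratic variation is at most $4\int_0^t\gamma(s)\,ds$ and hence of order $t$ by \ref{H_1}, so the martingale is $o(t)$ almost surely; this is equally rigorous, slightly more economical, and yields the factor $2$ on $\rho$ at once. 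Two of your side remarks deserve emphasis. First, the obstacle you flag in applying \ref{H_2} is a genuine gap in the paper's own proof, which asserts that $\int_0^t\langle u,h(s,u)\rangle_{L^2(\O_\eps)}^2\|u(s)\|_{L^2(\O_\eps)}^{-4}\,ds\ge\int_0^t\rho(s)\,ds$ follows ``immediately'' from \ref{H_2}: the pointwise bound $|h(s,u)u|^2\ge\rho(s)|u|^4$ does not control the signed integral $\int_{\O_\eps}u\,h(s,u)\,dx$ without a fixed-sign assumption on $u\,h(t,u)$ (for instance $h(t,u)=u$ for $u\ge 0$ and $h(t,u)=-2u$ for $u<0$ satisfies \ref{H_0}--\ref{H_2} yet permits cancellation), so making that structural condition explicit, as you propose, repairs rather than complicates the argument. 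Second, your domain-monotonicity observation $\lambda_1^D(\O_\eps)\ge\lambda_1^D(\O)$ is correct and shows that the decay estimate with $\eps$-independent rate in fact holds for \emph{every} $\eps$; the expansion of Theorem \ref{theo:2.1} is needed only to quantify $\eps_0$ in \eqref{epsilon0} and the two-sided comparability of the spectral gap. The one soft spot on your side is non-degeneracy: forward pathwise uniqueness plus \ref{H_0} shows that $0$ is absorbing, not that it is never attained, so ruling out that $\|u(t)\|_{L^2(\O_\eps)}$ hits zero in finite time requires a stopping-time or backward-uniqueness argument --- a point on which the paper is silent as well, so this does not put you behind its proof.
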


\begin{remark}
	We remark that the stabilization shown in Theorem \ref{theo:3.1} is due to the multiplicative It\^o noise, which, after applying the It\^o formula, produces an ``artifical'' stabilizing term (see the proof in Section \ref{sec:3}. If the noise is considered in the sense of Stratonovich, such a term does not appear and the stabilizing effect is more intriguing. To obtain the stabilization by Stratonovich noise, following \cite{arnold1983stabilization,caraballo2004stabilisation,caraballo2007effect}, we will need to considered {\normalfont a collection of noisy terms} of the form $\sum_{i} B_iu\circ dW_t^i$ for suitable operators $B_i$. This interesting case is left for future research.
\end{remark}

It is emphasised that $\eps_0$ defined in \eqref{epsilon0} depends explicitly and only on the parameters $\beta, \rho_0, \gamma_0$ and the domain $\O$ through the first eigenvalue $\lambda_1^D(\O)$. It is known that when $\beta > \lambda_1^D(\O)$, the zero steady state to the deterministic equation \eqref{main-prob}, i.e. $h\equiv 0$, is unstable (see e.g. \cite[Theorem 2.2]{fellner2019stabilisation} for the case of Robin boundary conditions, whose arguments can be also applied to Dirichlet boundary conditions). By choosing the diffusion coefficient $h(t,u)$ such that \eqref{epsilon0} holds, for instance, $h(t,u) = \alpha u$ with $\alpha^2 > 2(-\lambda_1^D(\O) + \beta)$, then the zero solution is (exponentially) stabilized provided that the size $\eps$ of the holes do not exceed $\eps_0$ given in \eqref{epsilon0}. Our main idea is to use the asymptotic behavior of the first eigenvalue $\lambda_1^D(\O_\eps)$ of the Dirichlet-Laplacian in $\O_\eps$ as $\eps\to 0$, which will be proved in Section \ref{sec:2}. We then utilize these results in Section \ref{sec:3} to prove our main theorem.
\section{Asymptotic behavior of the first eigenvalue}\label{sec:2}
On the perforated domain $\O_\eps$, we investigate the eigenvalue problem
\begin{align}\label{EVPs}
\begin{cases}
-\Delta \varphi(x) = \lambda(\eps)\varphi(x), & x\in \O_\eps,\\
\varphi(x) = 0, & x\in \partial \O_\eps.
\end{cases}
\end{align}
Here, $\varphi(x)$ represents the eigenfunction, and $\lambda(\eps)$ is the corresponding eigenvalue. 

\medskip
Let $\lambda_1^D(\O_\eps)$ denote the first eigenvalue of $-\Delta$ in $\O_\eps$ under the Dirichlet boundary condition on $\partial \O_\eps$, and let $\lambda_1^D(\O)$ be the first eigenvalue of $-\Delta$ in $\O$ under the Dirichlet boundary condition on $\partial \O$. It is classical that
\begin{equation}\label{Poincare}
    \|\nabla u\|_{L^2(\O)}^2 \ge \lambda_1^D(\O)\|u\|_{L^2(\O)}^2 \; \text{ and } \; \|\nabla v\|_{L^2(\O_\eps)}^2 \ge \lambda_1^D(\O_\eps)\|v\|_{L^2(\O_\eps)}^2,
\end{equation}
for all $u\in H_0^1(\O)$ and all $v\in H_0^1(\O_\eps)$.

\medskip
The asymptotic behavior of $\lambda_1^D(\O_\eps)$ as $\eps\to 0$ has been extensively investigated in the literature, see e.g. \cite{ozawa1981singular,ozawa1982asymptotic,maz1985asymptotic,abatangelo2022ramification}.In the early works, the holes are often considered to be balls, which makes the analysis easier. In order to allow various shapes for the holes, later works utilize the concept of {\it capacity potential}, which measures the ability of the set $A$ to hold electric charge or conduct heat, see e.g. \cite{choquet1954theory}.
The capacity potential of a set $A \subset \O$ with respect to $\O$ is defined as follows:
\begin{align*}
    \textup{cap} (A) := \inf\left\{\int_\O |\nabla u|^2dx:\, u \in H^1_0(\O), \, u \ge 1 \text{ on } A\right\} .
\end{align*}

\begin{lemma}\label{lem1}
As $\eps\to 0$, the capacity potential $\textup{cap}(E^i_\eps)$ is estimated as
\begin{align*}
\textup{cap}(E^i_\eps) \le
\begin{cases}
\dfrac{2\pi}{-\log \eps} + o(\log^{-1}(\eps))& \text{if } n = 2,\\
4\pi\eps + o(\eps)& \text{if } n = 3.
\end{cases}
\end{align*}
\end{lemma}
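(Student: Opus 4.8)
The plan is to bound the capacity from above by testing the variational problem with a single explicit competitor. This uses two structural facts: that $\textup{cap}(\cdot)$ is an infimum, so the energy of any admissible function is an upper bound, and that it is monotone, so since $E^i_\eps \subset \mathbb{B}^i_\eps$ every function that is $\ge 1$ on the ball $\mathbb{B}^i_\eps$ is admissible for $E^i_\eps$. It therefore suffices to exhibit one $u\in H^1_0(\O)$ with $u\ge 1$ on $\mathbb{B}^i_\eps$ and to compute $\int_\O|\nabla u|^2$. Let $x_i$ denote the center of $\mathbb{B}^i_\eps$ and fix a radius $R>0$, independent of $\eps$, small enough that $\overline{\mathbb{B}(x_i,R)}\subset\O$ for every hole and all small $\eps$; this is possible because the finitely many holes are properly inside $\O$ and shrink to points.

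On the annular condenser $\{\eps<|x-x_i|<R\}$ I would take $u$ to be the radial harmonic function interpolating the value $1$ on $\{|x-x_i|=\eps\}$ and $0$ on $\{|x-x_i|=R\}$, extended by $1$ inside the inner sphere and by $0$ outside the outer sphere:
\[
u(x)=\begin{cases} 1, & |x-x_i|\le \eps,\\ \Phi(|x-x_i|), & \eps<|x-x_i|<R,\\ 0, & |x-x_i|\ge R,\end{cases}
\]
where $\Phi(r)=\dfrac{r^{-1}-R^{-1}}{\eps^{-1}-R^{-1}}$ for $n=3$ and $\Phi(r)=\dfrac{\log(r/R)}{\log(\eps/R)}$ for $n=2$. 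Because $\Phi$ matches the constants $1$ and $0$ continuously at $r=\eps$ and $r=R$, the function $u$ is Lipschitz with compact support in $\O$, hence $u\in H^1_0(\O)$, and $u\equiv 1\ge 1$ on $\mathbb{B}^i_\eps\supset E^i_\eps$, so $u$ is admissible.

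Since $u$ is constant off the annulus, $\int_\O|\nabla u|^2=\int_{\eps<|x-x_i|<R}|\Phi'(|x-x_i|)|^2\,dx$ reduces to a one-dimensional radial integral, and a direct computation gives the classical condenser capacities
\[
\int_\O|\nabla u|^2=\begin{cases}\dfrac{2\pi}{\log(R/\eps)}, & n=2,\\[4pt] \dfrac{4\pi}{\eps^{-1}-R^{-1}}=\dfrac{4\pi\eps R}{R-\eps}, & n=3.\end{cases}
\]
Finally I would expand these for fixed $R$ as $\eps\to 0$: writing $\log(R/\eps)=-\log\eps+\log R$ gives $\frac{2\pi}{-\log\eps}\bigl(1+O(1/\log\eps)\bigr)=\frac{2\pi}{-\log\eps}+O(\log^{-2}\eps)$, whose remainder is $o(\log^{-1}\eps)$; likewise $\frac{4\pi\eps R}{R-\eps}=4\pi\eps\,(1-\eps/R)^{-1}=4\pi\eps+O(\eps^2)=4\pi\eps+o(\eps)$. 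Combining with $\textup{cap}(E^i_\eps)\le\int_\O|\nabla u|^2$ yields exactly the claimed bounds.

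The computation itself is routine, so the only genuine points requiring care are structural rather than hard: verifying $u\in H^1_0(\O)$ (continuity of the glued profile and compactness of support, which forces $R$ to be chosen independently of $\eps$ and uniformly over the finitely many holes), and confirming that the $\eps$-dependent corrections in the explicit capacities are absorbed into the stated $o(\cdot)$ remainders — in particular that the $n=2$ correction is $O(\log^{-2}\eps)$, one order smaller than the leading $\log^{-1}\eps$ term. No estimate of the nonspherical geometry of $E^i_\eps$ is needed, precisely because the inclusion $E^i_\eps\subset\mathbb{B}^i_\eps$ lets us replace the hole by the ball.
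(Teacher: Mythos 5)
Your proof is correct, but it takes a genuinely different route from the paper. The paper's proof is essentially two lines: it quotes Flucher's asymptotic formula for the capacity of the ball $\mathbb{B}^i_\eps$ (an exact asymptotic equality, cited from \cite{flucher1995approximation}) and then invokes the monotonicity of capacity (cited from \cite{choquet1954theory}) to pass from $\mathbb{B}^i_\eps$ to $E^i_\eps \subset \mathbb{B}^i_\eps$. You instead prove the upper bound from scratch: you use that the capacity is an infimum, so the Dirichlet energy of any single admissible competitor is an upper bound, and you build the explicit radial condenser profile on the annulus $\{\eps<|x-x_i|<R\}$, compute its energy ($2\pi/\log(R/\eps)$ for $n=2$, $4\pi/(\eps^{-1}-R^{-1})$ for $n=3$), and expand with $R$ fixed as $\eps\to 0$. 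I checked the radial integrals and the expansions; they are right, and the remainders you obtain, $O(\log^{-2}\eps)$ and $O(\eps^2)$, are in fact stronger than the stated $o(\log^{-1}\eps)$ and $o(\eps)$. Note that your observation that any function admissible for the ball is admissible for $E^i_\eps$ is precisely the monotonicity property the paper cites, just used in unwound form. What your approach buys: it is elementary and self-contained, eliminating the dependence on Flucher's theorem for this lemma (that citation is still needed for Theorem \ref{theo:2.1}, so the paper loses nothing by reusing it here) and making the error terms explicit. What the paper's approach buys: brevity, and the sharper information that for balls the stated expression is the actual asymptotic value, not merely an upper bound. One point you correctly flag but should keep explicit in a final write-up: choosing $R$ independent of $\eps$ with $\overline{\mathbb{B}(x_i,R)}\subset\O$ uses the standing assumption that the finitely many holes shrink to fixed points properly inside $\O$, which the paper states in the paragraph following the lemma.
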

\begin{proof}
    According to \cite{flucher1995approximation}, the capacity $\textup{cap}(\mathbb{B}^i_\eps)$ is given by:
  \begin{align*}
\textup{cap}(\mathbb{B}^i_\eps) =
\begin{cases}
\dfrac{2\pi}{-\log \eps} + o(\log^{-1}(\eps))& \text{if } n = 2,\\
4\pi\eps + o(\eps)& \text{if } n = 3.
\end{cases}
\end{align*}
 The monotonicity property of capacity \cite[Chapter II]{choquet1954theory}) states that the capacity of a smaller set is less than or equal to the capacity of any larger set containing it. Consequently, since $E^i_\eps \subseteq \mathbb{B}^i_\eps$, we have $\text{cap} (E^i_\eps) \le \text{cap}(\mathbb{B}^i_\eps)$, which gives the desired bound and finishes the proof of this Lemma.
\end{proof}

As $\eps\to 0$, each hole $E_\eps^i$ is assumed to shrink to a fixed point $x_i\in E_\eps^i$ for each $i=1,\ldots, \ell$. Now, we present the following theorem that establishes a relationship between the first eigenvalues of the perforated and original domains.
\begin{theorem}\label{theo:2.1}  The first eigenvalue of problem \eqref{EVPs} has the following asymptotics as $\eps$ tends to zero
\begin{align}\label{eq:asym_1}
\lambda^D_1(O_\eps) = \lambda^D_1(\O) +  \sum_{i=1}^\ell \varphi_1^2(x_i) \,\textup{cap}(E^i_\eps) + o(\textup{cap}(\cup_{i=1}^\ell E^i_\eps)),\quad \ell \in \mathbb{N},
\end{align}
where $\varphi_1$ represents the first eigenfunction corresponding to $\lambda^D_1(\O)$. Consequently
\begin{align}\label{asym_2}
    \lambda_1^D(\O_\eps)= 
    \begin{cases}
        \lambda_1^D(\O) + O((\log\eps)^{-1}) &\text{ for } n = 2,\\
        \lambda_1^D(\O) + O(\eps) &\text{ for } n = 3.\\
    \end{cases}
\end{align}
\end{theorem}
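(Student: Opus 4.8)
The plan is to establish the asymptotic expansion \eqref{eq:asym_1} via a variational (Rayleigh quotient) argument, then obtain \eqref{asym_2} as a direct consequence by inserting the capacity estimates from Lemma~\ref{lem1}. The core idea is that removing small holes from $\O$ perturbs the first eigenvalue by an amount governed, to leading order, by the capacity of the holes weighted by the squared value of the unperturbed eigenfunction $\varphi_1$ at the shrinking points $x_i$.

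**First I would** prove the lower bound $\lambda_1^D(\O_\eps) \ge \lambda_1^D(\O)$, which is immediate from domain monotonicity: since $H_0^1(\O_\eps)$ embeds into $H_0^1(\O)$ by extension by zero into the holes, the min-max characterization $\lambda_1^D(\O_\eps) = \inf_{v \in H_0^1(\O_\eps)} \|\nabla v\|^2 / \|v\|^2$ is taken over a smaller admissible set, forcing the eigenvalue up. For the matching upper bound and the precise leading term, I would construct a competitor test function in the Rayleigh quotient on $\O_\eps$. The natural choice is
\begin{equation*}
    v_\eps = \varphi_1 - \sum_{i=1}^\ell w_\eps^i,
\end{equation*}
where $w_\eps^i$ is built from the capacity potential of $E_\eps^i$ (the minimizer realizing $\textup{cap}(E_\eps^i)$, cut off near $x_i$ and scaled by $\varphi_1(x_i)$) so that $v_\eps$ vanishes on $\partial E_\eps^i$ and hence lies in $H_0^1(\O_\eps)$. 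One then computes the numerator $\|\nabla v_\eps\|_{L^2(\O_\eps)}^2$ and denominator $\|v_\eps\|_{L^2(\O_\eps)}^2$ and expands in $\eps$.

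**The key computation** is that the cross terms and the gradient energy of the corrector produce exactly $\sum_i \varphi_1^2(x_i)\,\textup{cap}(E_\eps^i)$ to leading order: using that $\varphi_1$ is smooth near the interior points $x_i$ (elliptic regularity), $\varphi_1$ is essentially constant $\approx \varphi_1(x_i)$ on the small ball $\mathbb{B}_\eps^i$, so the localized corrector contributes its capacity scaled by $\varphi_1^2(x_i)$, while the denominator changes only by $o(\textup{cap}(\cup_i E_\eps^i))$ because the correctors have small $L^2$ mass. Combining the variational upper bound with the monotonicity lower bound and controlling the error terms yields \eqref{eq:asym_1}. For \eqref{asym_2}, I would simply substitute the bounds of Lemma~\ref{lem1} and note that $\varphi_1^2(x_i)$ is a bounded constant, so $\sum_i \varphi_1^2(x_i)\,\textup{cap}(E_\eps^i) = O((\log\eps)^{-1})$ when $n=2$ and $O(\eps)$ when $n=3$, the remainder being of the same order or smaller.

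**The hard part will be** making the corrector construction rigorous and sharp enough that the error is genuinely $o(\textup{cap}(\cup_i E_\eps^i))$ rather than merely $O$ of the capacity. This requires careful control of three things: the interaction between the corrector and the smooth variation of $\varphi_1$ away from $x_i$ (handled by a cutoff whose transition region has controlled energy), the mutual interaction between correctors at distinct holes (negligible since the holes stay a fixed distance apart as $\eps\to 0$), and the accurate identification of the capacity term with the quadratic form of the capacity potential. Since this is a classical type of result with a substantial literature (the cited works \cite{ozawa1981singular,maz1985asymptotic,abatangelo2022ramification}), I would lean on established capacity-potential estimates rather than rederive them, and focus the argument on assembling the two-sided bound and verifying the error order matches the stated remainder.
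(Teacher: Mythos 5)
Your route is genuinely different from the paper's: the paper does not construct test functions at all, but instead quotes the expansion for simple eigenvalues from \cite[Theorem 6]{flucher1995approximation} (combined with sub-additivity and asymptotic additivity of capacity over the finitely many disjoint holes, and the classical fact that the first eigenvalue is simple). However, your proposal has a genuine gap that prevents it from proving \eqref{eq:asym_1}. Your lower bound is only domain monotonicity, $\lambda_1^D(\O_\eps)\ge\lambda_1^D(\O)$, while your corrector construction gives the upper bound $\lambda_1^D(\O_\eps)\le\lambda_1^D(\O)+\sum_{i=1}^\ell\varphi_1^2(x_i)\,\textup{cap}(E_\eps^i)+o(\textup{cap}(\cup_{i=1}^\ell E_\eps^i))$. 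Together these only confine the shift $\lambda_1^D(\O_\eps)-\lambda_1^D(\O)$ to the interval between $0$ and the capacity term; they do \emph{not} show that the shift equals the capacity term to leading order, which is what \eqref{eq:asym_1} asserts. The matching sharp lower bound is the hard half of such expansions, and it cannot come from competitors in $H_0^1(\O_\eps)$, since any test function only bounds $\lambda_1^D(\O_\eps)$ from above; one must instead work with the perforated eigenfunction $\varphi_{1,\eps}$ itself (extend it across the holes, compare it with $\varphi_1$, and show its energy excess is at least the capacity term), or cite the literature as the paper does. Two smaller technical points in your upper bound: the function $v_\eps=\varphi_1-\sum_i\varphi_1(x_i)w_\eps^i$ does not actually vanish on $\partial E_\eps^i$, since $\varphi_1$ is not exactly constant there, so a further correction layer is needed; and the denominator is perturbed by the cross term $-2\sum_i\langle\varphi_1,w_\eps^i\rangle_{L^2(\O)}$, which is $O(\textup{cap})$ rather than $o(\textup{cap})$ --- the computation survives only because, after integrating by parts with $-\Delta\varphi_1=\lambda_1^D(\O)\varphi_1$, the same quantity appears in the numerator multiplied by $\lambda_1^D(\O)$ and cancels in the quotient.

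On the positive side, what your argument does deliver is the two-sided bound $0\le\lambda_1^D(\O_\eps)-\lambda_1^D(\O)\le C\,\textup{cap}(\cup_{i=1}^\ell E_\eps^i)$, which combined with Lemma \ref{lem1} yields \eqref{asym_2}; and \eqref{asym_2} --- not the refined expansion \eqref{eq:asym_1} --- is the only statement used in the proof of the main stabilization theorem. So your approach would suffice for the paper's application if the claim \eqref{eq:asym_1} were dropped or cited. It is also worth noting that your use of interior elliptic regularity to make sense of $\varphi_1(x_i)$ at the fixed interior points $x_i$ is arguably cleaner than the paper's argument, which establishes a global bound $\varphi_1\in L^\infty(\O)$ via \cite{davies1989heat} precisely because the Lipschitz boundary only gives $\varphi_1\in H^{3/2}(\O)$, which fails to embed into $L^\infty$ when $n=3$. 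But as a proof of Theorem \ref{theo:2.1} as stated, the proposal is incomplete at the expansion \eqref{eq:asym_1}.
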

\begin{proof}
By applying \cite[Theorem 6]{flucher1995approximation} and using sub-additivity of the capacity and its asymptotic additivity for small disjoint sets,  we obtain the asymptotic expansion for each {\it simple} eigenvalue $\lambda_k(\O_\eps)$ as follows:
  \[ \lambda_k(\O_\eps) = \lambda_k(\O) + \sum_{i=1}^\ell \varphi_k^2(x_i) \textup{cap}(E^i_\eps) + o(\textup{cap}(\cup_{i=1}^\ell E^i_\eps)) \text{  as } \eps \to 0, \]
 where $\varphi_k$ are $L^2(\O
)$-orthonormal eigenfunctions associated with $\lambda_k(\O)$. It is classical that the first eigenvalue is simple, and therefore we have
\begin{align*}
\lambda^D_1(O_\eps) = \lambda^D_1(\O) + \sum_{i=1}^\ell \varphi_1^2(x_i) \textup{cap}(E^i_\eps) + o(\textup{cap}(\cup_{i=1}^\ell E^i_\eps)),
\end{align*}
which is the desired asymptotic expansion \eqref{eq:asym_1}. 

\medskip
To prove \eqref{asym_2}, it is sufficient to show that $\varphi_1$ is in $L^\infty(\O)$. Since the boundary $\pa\O$ is only Lipschitz continuous, using maximal regularity of elliptic equation for $-\Delta \varphi_1 = \lambda_1^D(\O)\varphi_1$ only gives $\varphi_1\in H^{3/2}(\O)$ which is not enough to obtain $L^\infty(\O)$-bound in the case $n=3$. Therefore, we use the results in 
\cite{davies1989heat} (see Theorem 2.1.4 and Example 2.1.8 therein), where it was shown that $\varphi_k$ are in $L^\infty(\O)$ for all $k\ge 1$. The asymptotic estimate \eqref{asym_2} the follows directly from \eqref{eq:asym_1}, Lemma \ref{lem1} and the boundedness of $\varphi_1$.
\end{proof}

\section{Stabilization by noise}\label{sec:3}

We start with the well-posedness for the stochastic equation \eqref{main-prob}.

\begin{theorem}[\protect{\cite[Theorem 3.1]{pardouxt1980stochastic}}]\label{thm:existence}
     Under assumptions \ref{H_1}--\ref{H_2}, the equation \eqref{main-prob} has a unique global strong solution in the sense 
     \begin{enumerate}[label=(\roman*)]
         \item $u \in L^2(\Omega; C([0,T];L^2(\O_\eps))$, and
         \item for $f(u)= u^3-\beta u$,
         \[
           u(t)+ \int_0^t (-\Delta u(s)+f(u(s)))ds=u_0+ \int_0^th(s,u(s)dW_s, \quad \;\text{a.e.}\; t\in (0,T), \;\text{a.s.}\; \omega\in\Omega.
         \]
     \end{enumerate}
\end{theorem}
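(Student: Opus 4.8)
The statement is quoted directly from \cite{pardouxt1980stochastic}, so the plan is to recast \eqref{main-prob} into the variational (Gelfand triple) framework of that reference and verify that its structural hypotheses are met. I would take the triple $V := H_0^1(\O_\eps) \hookrightarrow H := L^2(\O_\eps) \hookrightarrow V^* := H^{-1}(\O_\eps)$, identify the drift with the operator $A(t,u) := \Delta u - u^3 + \beta u$ and the diffusion with $B(t,u) := h(t,u)$ acting on the scalar Wiener process. The first thing to settle is that $A$ really maps $V$ into $V^*$. The linear parts are immediate; for the cubic term I would use the Sobolev embedding $H_0^1(\O_\eps) \hookrightarrow L^6(\O_\eps)$, which is available exactly because $n \in \{2,3\}$, so that $u \in L^6$ forces $u^3 \in L^2 \hookrightarrow V^*$. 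The same embedding yields the boundedness bound $\|A(t,u)\|_{V^*} \le c\,(1 + \|u\|_V^3)$.

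The core of the verification is coercivity. Pairing $A(t,u)$ with $u$ and adding the noise energy gives
\[
2\langle A(t,u), u\rangle + \|h(t,u)\|_{L^2(\O_\eps)}^2 = -2\|\nabla u\|_{L^2(\O_\eps)}^2 - 2\|u\|_{L^4(\O_\eps)}^4 + 2\beta\|u\|_{L^2(\O_\eps)}^2 + \|h(t,u)\|_{L^2(\O_\eps)}^2 .
\]
Invoking \ref{H_1} to estimate $\|h(t,u)\|_{L^2(\O_\eps)}^2 \le \gamma(t)\|u\|_{L^2(\O_\eps)}^2$ and discarding the favourable quartic term, the right-hand side is bounded above by $-2\|\nabla u\|_{L^2(\O_\eps)}^2 + (2\beta + \gamma(t))\|u\|_{L^2(\O_\eps)}^2$. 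This is precisely the required coercivity inequality, with the coercive $V$-norm supplied by the gradient term; continuity of $\gamma$ guarantees the needed local integrability in $t$ of the time-dependent constant.

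For (weak) monotonicity, and hence uniqueness, I would test $A(t,u) - A(t,v)$ against $u - v$ and exploit that $s \mapsto s^3$ is increasing, so $\langle u^3 - v^3,\, u - v\rangle \ge 0$, giving
\[
2\langle A(t,u) - A(t,v),\, u - v\rangle \le -2\|\nabla(u-v)\|_{L^2(\O_\eps)}^2 + 2\beta\|u-v\|_{L^2(\O_\eps)}^2 .
\]
Thus the drift is monotone up to the lower-order constant $2\beta$. Combined with the Lipschitz-type control of $h$ encoded in \ref{H_0}--\ref{H_1} (the coefficient vanishes at the origin and grows at most linearly with rate $\sqrt{\gamma(t)}$, as in the two examples), this closes the weak monotonicity condition; hemicontinuity is clear since the nonlinearity is polynomial and hence continuous along line segments in $V$.

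Having verified hemicontinuity, coercivity, monotonicity and boundedness, \cite[Theorem 3.1]{pardouxt1980stochastic} applies and delivers the unique global strong solution with the stated regularity $u \in L^2(\Omega; C([0,T]; L^2(\O_\eps)))$ and the integral identity in (ii). I expect the main obstacle to be the superlinear cubic nonlinearity: one has to use the dimension restriction $n \le 3$ consistently so that $u^3$ stays in the dual space and the variational pairing is well defined, while simultaneously exploiting the favourable sign of both the cubic and the Laplacian terms so that the linear noise growth from \ref{H_1} cannot destroy coercivity. The time-dependence of $\gamma$ (and of $\rho$) is a minor technical point handled by the Ces\`aro/integrability assumptions already built into \ref{H_1}--\ref{H_2}.
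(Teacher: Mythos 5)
The paper itself offers no proof of this statement: it is quoted directly as \cite[Theorem 3.1]{pardouxt1980stochastic}, so the only mathematical content a proof can supply is the verification that \eqref{main-prob} fits the variational framework of that reference. Your plan is therefore the right one in spirit, but your verification does not actually make the cited theorem applicable, because of an exponent mismatch. In the monotone variational framework, the coercivity condition $2\langle A(t,u),u\rangle + \|B(t,u)\|_H^2 \le \lambda\|u\|_H^2 - \theta\|u\|_V^p + f(t)$ and the boundedness condition $\|A(t,u)\|_{V^*} \le C\bigl(f(t)^{(p-1)/p} + \|u\|_V^{p-1}\bigr)$ must hold with the \emph{same} exponent $p$; this is not pedantry, since the solution is constructed in $L^p(\Omega\times[0,T];V)$ and one needs $A(u)\in L^{p/(p-1)}(\Omega\times[0,T];V^*)$ for the duality pairing and the It\^o formula to make sense. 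With your choice $V=H_0^1(\O_\eps)$, your coercivity has $p=2$ while your boundedness is cubic, $\|A(t,u)\|_{V^*}\le c(1+\|u\|_V^3)$; for $u$ merely in $L^2(0,T;V)$ this only gives $A(u)\in L^{2/3}(0,T;V^*)$, and the theorem does not apply. The setting Pardoux's framework is designed for is the triple built on $V=H_0^1(\O_\eps)\cap L^4(\O_\eps)$, $V^* = H^{-1}(\O_\eps)+L^{4/3}(\O_\eps)$, splitting the drift as $A_1+A_2$ with $A_1=\Delta$ (exponent $p_1=2$ on $H_0^1$) and $A_2(u)=-u^3+\beta u$ (exponent $p_2=4$ on $L^4$, where $\|u^3\|_{L^{4/3}}=\|u\|_{L^4}^{3}=\|u\|_{L^4}^{p_2-1}$ matches $\langle u^3,u\rangle = \|u\|_{L^4}^4$). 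In particular, the quartic term you ``discard'' in the coercivity estimate is exactly the term that must be kept, since it supplies the $L^4$-coercivity; and once this is done, the restriction $n\le 3$ that you lean on is not needed for well-posedness at all.

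A second, smaller gap: the weak monotonicity condition (hence uniqueness) requires $\|h(t,u)-h(t,v)\|_{L^2(\O_\eps)}^2 \le c(t)\|u-v\|_{L^2(\O_\eps)}^2$, i.e.\ Lipschitz continuity of $h(t,\cdot)$. You assert this is ``encoded in \ref{H_0}--\ref{H_1}'', but it is not: those assumptions give only the linear growth bound $|h(t,u)|\le \sqrt{\gamma(t)}\,|u|$ together with $h(t,0)=0$, which does not imply Lipschitz continuity (take $h(u)=u\sin(1/u)$, $h(0)=0$: it satisfies \ref{H_0}--\ref{H_1} with $\gamma\equiv 1$ but is not Lipschitz near the origin). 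Lipschitz continuity of $h$ is a hypothesis of the cited theorem that has to be imposed in addition; the paper's own statement is equally terse on this point, but your proof claims an implication that is false.
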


We are now ready to prove our main result.
\begin{proof}[Proof of Theorem \ref{theo:3.1}]
    Fix $0\ne u_0 \in L^2(\O_\eps)$ and let $u(t)$ be the corresponding solution to problem \eqref{main-prob} with initial $u_0$ obtained in Theorem \ref{thm:existence}. %Then, we have
    %\[ u(t)+ \int_0^t -\Delta u(s)ds+ \int_0^tf(u(s))ds=u_0+ \int_0^t h(s,u(s))dWs,\]
    %where $f(u)= u^3-\beta u$. (\blue{Bao: to check if we can write the stochastic differential equation as the above relation!})
    Applying It\^o's formula (see e.g. \cite[Theorem 4.3] {gawarecki2010stochastic} or, more general \cite[Theorem 6.10]{gawarecki2010stochastic}), for $\psi(u) := \log \|u(t)\|^2_{L^2(\O_\eps)}$, and using
    \begin{equation*}
       \psi'(u) = \frac{2 \langle u,\cdot \rangle_{L^2(\O_\eps)}}{\|u\|^2_{L^2(\O_\eps)}}, \quad \psi''(u)\xi = \frac{2 \langle \xi, \cdot\rangle_{L^2(\O_\eps)}}{\|u\|^2_{L^2(\O_\eps)}} -\frac{4\langle u,\xi\rangle_{L^2(\O_\eps)} \langle u,\cdot \rangle_{L^2(\O_\eps)}}{\|u\|^4_{L^2(\O_\eps)}} , \quad u,\xi \in L^2(\O_\eps).
    \end{equation*}
    we get
    %we see that
    %\begin{align*}
    %    \|u(t)\|^2+ 2\int_0^t \|u(s)\|_{H^1_0(\O_\eps)}^2ds + 2 \int_0^t f(u(s))u(s)ds \\
    %    = \|u_0\|^2 +2 \int_0^t(h(s,u),u(s))dW_s.
    %\end{align*}
    %Then employ It\^o's formula for $\log \|u(t) \|^2$ and noting that $h(t,u_*)=0$, we have
    \begin{equation} \label{eq:2}
    \begin{aligned}
        \log \|u(t)\|^2_{L^2(\O_\eps)}=&\log \|u_0 \|^2_{L^2(\O_\eps)} - \int_0^t \frac{2}{\|u(s)\|^2_{L^2(\O_\eps)}}\times \\
        &\times \left( \|\nabla u(s)\|^2_{L^2(\O_\eps)}+ \int_{\O_\eps}f(u(s))u(s)dx
        -\frac{1}{2} \|h(s,u(s))\|^2_{L^2(\O_\eps)} \right)ds  \\
        &-2 \int_0^t \frac{\langle u(s),h(s,u(s))\rangle^2_{L^2(\O_\eps)}}{\|u(s)\|^4_{L^2(\O_\eps)}}ds 
        +2 \int_0^t \frac{\langle u(s),h(s,u(s))\rangle_{L^2(\O_\eps)}}{\|u(s)\|^2_{L^2(\O_\eps)}}dW_s.  
        \end{aligned}
    \end{equation}
    Since $f(u) = u^3 - \beta u$, we have
    \begin{align}\label{eq:3}
        \int_{\O_\eps}f(u(s))u(s)dx \ge -\beta \|u(s)\|^2_{L^2(\O_\eps)}.
    \end{align}
    Moreover, thanks to the Poincar\'e inequality in \eqref{Poincare}
    \begin{align} \label{eq:4}
        \|\nabla u(s)\|^2_{L^2(\O_\eps)} \ge \lambda^D_1(\O_\eps)  \|u(s) \|^2_{L^2(\O_\eps)}.
    \end{align}
    Using assumption \ref{H_1}, we get
    \begin{align} \label{eq:5}
         \int_0^t \frac{\|h(s,u(s))\|^2_{L^2(\O_\eps)} }{\|u(s) \|^2_{L^2(\O_\eps)}} ds \le \int_0^t \gamma(s) ds.
    \end{align}
    Let $\delta>0$ be arbitrary.
    To deal with the stochastic integral, we utilize the exponential martingale inequality (see, e.g. \cite[ Lemma 1.1]{liu1997stability}) to get
    \begin{align*}
         \mathbb{P} \left\{\omega: \sup_{0\le t\le w}\left[ \int_0^t \frac{\langle u(s),h(s,u(s))\rangle_{L^2(\O_\eps)}}{\|u(s)\|^2_{L^2(\O_\eps)}} dW_s- \frac{\alpha}{2} \int_0^t \frac{\langle u(s),h(s,u(s))\rangle^2_{L^2(\O_\eps)}}{ \|u(s)\|^4_{L^2(\O_\eps)}} ds\right] > \frac{2\log k}{\alpha} \right\} \le \frac{1}{k^2}
    \end{align*}
where $0<\alpha<1$, $w=k \delta$ and $k=1,2,\dots$. We then apply the well-known Borel-Cantelli lemma to obtain that there exists an integer $k_0(\delta, \omega)>0$ for almost all $\omega \in \Omega$ such that
\begin{align}\label{eq:6}
    \int_0^t \frac{\langle u(s),h(s,u(s))\rangle_{L^2(\O_\eps)}}{\|u(s)\|^2_{L^2(\O_\eps)}} dW_s \le \frac{2 \log k}{\alpha} +\frac{\alpha}{2} \int_0^t \frac{\langle u(s),h(s,u(s))\rangle^2_{L^2(\O_\eps)}}{\|u(s)\|^4_{L^2(\O_\eps)}} ds
\end{align}
for all $0 \le t \le k \delta$, $k \ge k_0(\delta,\omega)$. Combining \eqref{eq:2}--\eqref{eq:6}, we see that there exists a positive random integer $k_1(\delta)$ such that almost surely
\begin{align*}
    \log \| u(t)\|^2_{L^2(\O_\eps)} \le \log \|u_0\|^2_{L^2(\O_\eps)} &+ \frac{4 \log k}{\alpha} + (2\beta -2 \lambda_1^D(\O_\eps))t \\
    &+ \int_0^t \gamma(s)ds -(2-\alpha)\int_0^t \frac{\langle u(s),h(s,u(s))\rangle^2_{L^2(\O_\eps)}}{\|u(s)\|^4_{L^2(\O_\eps)}}ds,
\end{align*}
for all $(k-1)\delta \le t \le k \delta$, $k \le k_1(\delta)$, which for the preceding $\delta >0 $ together with \ref{H_2} immediately implies that 
\begin{align*}
    \log \| u(t)\|^2_{L^2(\O_\eps)} \le \log \|u_0\|^2_{L^2(\O_\eps)}+ \frac{4 \log k}{\alpha} + (2\beta -2 \lambda_1^D(\O_\eps))t + \int_0^t \gamma(s)ds -(2-\alpha)\int_0^t \rho(s)ds.
\end{align*}
Thus, 
\begin{align*}
   \frac{1}{t} \log \| u(t)\|^2_{L^2(\O_\eps)} \le \frac{1}{t}\log \|u_0\|^2_{L^2(\O_\eps)}+ \frac{1}{t}\frac{4 \log k}{\alpha} + (2\beta -2 \lambda_1^D(\O_\eps)) -(2-\alpha) \frac{1}{t} \int_0^t \rho(s)ds + \frac{1}{t} \int_0^t \gamma(s) ds.
\end{align*}
Taking the limsup of both sides as $t\to\infty$, using $\limsup(-y_n) = -\liminf y_n$, and the assumptions \ref{H_1} and \ref{H_2}
\begin{align*}
    \limsup_{t \to \infty} \frac{1}{t} \log \|u(t)\|^2_{L^2(\O_\eps)} \le 2 \beta-2  \lambda_1^D(\O_\eps)-(2-\alpha)\rho_0+\gamma_0.
\end{align*}
Passing to the limit $\alpha \to 0$ yields
\begin{align*}
    \limsup_{t \to \infty} \frac{1}{t} \log \|u(t)\|^2_{L^2(\O_\eps)} \le 2\beta +\gamma_0-2 \lambda_1^D(\O_\eps) -2 \rho_0 \quad\text{ a.s.}
\end{align*}
Due to Theorem \ref{theo:2.1}, we have the asymptotic formula
\begin{align*}
    \lambda_1^D(\O_\eps)= 
    \begin{cases}
        \lambda_1^D(\O) + O((\log\eps)^{-1}) &\text{ for } n = 2,\\
        \lambda_1^D(\O) + O(\eps) &\text{ for } n = 3.\\
    \end{cases}
\end{align*}
Therefore, for all $\eps \le \eps_0$ with $\eps_0$ is in \eqref{epsilon0}, we arrive at the estimate
\begin{align*}
     \limsup_{t \to \infty} \frac{1}{t} \log \|u(t)\|^2_{L^2(\O_\eps)} \le 2\beta +\gamma_0-2 \lambda_1^D(\O) -2 \rho_0 \quad\text{ a.s.}
\end{align*}
It follows that for $\lambda_1^D(\O) + \rho_0> \beta+ \frac{1}{2} \gamma_0$, the zero solution becomes exponentially stable.
\end{proof}

\medskip
\noindent{\Large\textbf{Acknowledgement.}} 

\medskip
\noindent We thank the referees for the valuable comments and remarks. This work is carried out during an internship of Hong-Hai Ly at University of Graz, and the university's hospitality is acknowledged. Hong-Hai Ly is supported by grant SGS09/P\v{r}F/2023. B.T. received funding from the FWF project “Quasi-steady-state approximation for PDE”, number I-5213.

\end{document}